\documentclass[10pt]{amsart}

\usepackage[T1]{fontenc}
\usepackage{lmodern}
\usepackage{amsmath,amssymb,amsfonts,mathtools}
\usepackage{enumitem}
\usepackage{microtype}
\usepackage[hidelinks]{hyperref}
\hypersetup{
  pdftitle={Algebraic values of transcendental functions with geometric coefficient moduli},
  pdfauthor={Diego Marques},
  pdfkeywords={Mahler's Problem A, algebraic values, transcendental functions, geometric coefficient moduli, polygonal cancellation, non-Archimedean valuations, $S$-units}
}

\newtheorem{theorem}{Theorem}[section]
\newtheorem{proposition}[theorem]{Proposition}
\newtheorem{lemma}[theorem]{Lemma}
\newtheorem{corollary}[theorem]{Corollary}
\newtheorem*{questionA}{Mahler's Problem A}
\theoremstyle{definition}
\newtheorem{definition}[theorem]{Definition}
\theoremstyle{remark}
\newtheorem{remark}[theorem]{Remark}

\numberwithin{equation}{section}

\newcommand{\N}{\mathbb N}
\newcommand{\Z}{\mathbb Z}
\newcommand{\Q}{\mathbb Q}
\newcommand{\R}{\mathbb R}
\newcommand{\C}{\mathbb C}
\newcommand{\Qbar}{\overline{\mathbb Q}}
\newcommand{\D}{\mathcal D}
\newcommand{\Rset}{\mathcal R}
\newcommand{\abs}[1]{\lvert #1\rvert}
\newcommand{\ord}{\operatorname{ord}}
\newcommand{\B}{B(0,1)}

\title[Transcendental power series with geometric coefficient moduli]
{Algebraic values of transcendental power series with geometric coefficient moduli}

\author{Diego Marques}
\address{Departamento de Matem\'atica, Universidade de Bras\'ilia, Bras\'ilia, DF, Brazil}
\email{diego@mat.unb.br}

\subjclass[2020]{Primary 11J81; Secondary 30B10, 11R04}
\keywords{Mahler's Problem A, algebraic values, transcendental functions, lacunary power series, geometric coefficient moduli, polygonal cancellation}

\begin{document}

\begin{abstract}
Let $\lambda>1$ be a real algebraic number. We construct continuum many
power series $f(z)=\sum_{k\geq0}a_kz^k$ of radius of convergence exactly
one such that every nonzero coefficient $a_k$ is algebraic and has modulus
$\lambda^m$ for some $m\geq0$. Moreover, for every integer $s\geq0$, the derivative $f^{(s)}$ takes
algebraic values at all algebraic points of the open unit disk and is
transcendental over $\C(z)$. The proof combines algebraic polygonal cancellation with a sparse
polynomial-block argument. This shows that a multiplicative rank-one
restriction on coefficient moduli is compatible with algebraicity of
the full analytic jet at every algebraic point once algebraic phases
are allowed.
\end{abstract}

\maketitle

\section{Introduction}

The arithmetic nature of the values of transcendental functions has been a
classical theme since the nineteenth century. By a \emph{transcendental
function} on a domain $\Omega\subseteq\C$, we mean an analytic function that
is not algebraic over $\C(z)$; equivalently, it satisfies no identity
$P(z,f(z))=0$ with $0\neq P\in\C(z)[Y]$. Although classical transcendence
theorems often force transcendental values at algebraic arguments, no such
principle holds for arbitrary transcendental functions.

In an 1886 letter to Strauss, Weierstrass asked whether there exists a
transcendental entire function taking algebraic values at every algebraic
point. St\"ackel answered this question affirmatively in 1895 and proved a
much more flexible interpolation theorem: for a countable set
$\Sigma\subseteq\C$ and a dense set $T\subseteq\C$, one can construct a
transcendental entire function $f$ such that
\[
        f(\Sigma)\subseteq T.
\]
Taking $\Sigma=T=\Qbar$ gives the phenomenon envisioned by Weierstrass
\cite{stackel1895,waldschmidt2003}. St\"ackel later obtained stronger
arithmetic interpolation results involving both a function and its inverse
\cite{stackel1902}; see also
\cite{huangmarquesmereb2010, vanderpoorten1968,waldschmidt2003}.

These results reveal the remarkable flexibility of analytic interpolation
when no restrictions are imposed on the Taylor coefficients. The situation
changes sharply when the coefficients themselves are required to satisfy
arithmetic constraints. This tension between arithmetic values and rigid
Taylor expansions motivated three problems formulated by Mahler, usually
called Problems~A, B, and C. Problems~B and~C have been solved; see, for
example, \cite{marquesmoreira2017,marquesmoreira2018}. The first problem,
which places the strongest direct restriction on the coefficients, remains
open.

Let $\Z\{z\}$ denote the ring of power series with integer coefficients that
are analytic in the open unit disk $\B$.

\begin{questionA}\label{question:mahlerA}
Does there exist a function
\[
        f(z)=\sum_{k\geq0}a_kz^k\in\Z\{z\},
\]
transcendental over $\C(z)$, whose coefficients are bounded and such that
\[
        f\bigl(\Qbar\cap\B\bigr)\subseteq\Qbar?
\]
\end{questionA}

Since a bounded sequence of integers takes values in a finite alphabet,
Problem~A asks whether arithmetic interpolation at every algebraic point of
the disk can coexist with an exceptionally rigid Taylor expansion. Related
directions include Mahler's work on lacunary series \cite{mahler1965}, the
asymptotic variants studied in \cite{franciscomarques2023, franciscomarques}, and recent
refinements concerning values of generalized Liouville series
\cite{bilumarquesmoreira2025}.

A result especially relevant to the present work was obtained by Marques and
Moreira in \cite{marquesmoreira2021}. They showed that boundedness can be
replaced by the requirement that every nonzero coefficient have no prime
divisors other than $2$ and $3$, equivalently that it belong to
\[
        \{\pm2^a3^b:a,b\geq0\}.
\]
This raises a natural question: are two multiplicatively independent
generators essential, or can one work with coefficient sizes generated by a
single number?

The local-annihilation mechanism of \cite{marquesmoreira2021} relies on two
multiplicatively independent prime factors and does not extend directly to a
single real generator. Indeed, Section~\ref{section:real-phases} shows that,
for a prime $p$, the only nonzero rational zeros of polynomials with
coefficients in $\{0\}\cup\{\pm p^m:m\geq0\}$ are the signed integral powers
of $p$. This obstruction motivates the use of algebraic complex phases,
where the \textit{phase} of a nonzero complex number $z$ is the unit-modulus factor
$z/|z|$.

The main idea of this paper is to separate coefficient size from coefficient
phase. Fix a real algebraic number $\lambda>1$ and define
\begin{equation}\label{eq:Dlambda}
        \D_\lambda
        :=\{0\}\cup
        \bigl\{\gamma\in\Qbar:\abs{\gamma}=\lambda^m
        \text{ for some }m\in\Z_{\geq0}\bigr\}.
\end{equation}
The permitted nonzero moduli still form the rank-one multiplicative
semigroup generated by $\lambda$, but algebraic complex phases remain
available. This Archimedean freedom supplies precisely the local
cancellations that are unavailable over a real one-generator alphabet.

Our main theorem shows that this phase freedom is sufficient not only to
control the values of the function, but also to make its full analytic jet
algebraic at every algebraic point of the disk.

\begin{theorem}\label{theorem:main}
Let $\lambda\in\Qbar\cap\R$ with $\lambda>1$. There are continuum many functions
\[
        f(z)=\sum_{k\geq0}a_kz^k\in\Qbar[[z]],
\]
analytic in $\B$ and having radius of convergence exactly $1$, whose
coefficients satisfy $a_k\in\D_\lambda$ for every $k\geq0$, and such that,
for every integer $s\geq0$,
\[
        f^{(s)}\bigl(\Qbar\cap\B\bigr)\subseteq\Qbar.
\]
Moreover, every derivative $f^{(s)}$ is transcendental over $\C(z)$.
\end{theorem}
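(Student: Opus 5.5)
The plan is to build $f$ as a sum of widely separated polynomial blocks with coefficients in $\D_\lambda$, each block vanishing to high order at more and more algebraic points. Then every derivative at a fixed algebraic point is a finite algebraic sum. Enumerate $\Qbar\cap\B=\{\alpha_1,\alpha_2,\dots\}$. I would look for polynomials $P_j$ with coefficients in $\D_\lambda$, $P_j\neq0$, such that $P_j$ vanishes to order at least $j$ at each of $\alpha_1,\dots,\alpha_j$. I would then set
\[
f(z)=\sum_{j\ge1} z^{n_j}P_j(z),
\]
with $n_{j+1}$ much larger than $n_j+\deg P_j$, so that the blocks have disjoint supports. In particular every coefficient of $f$ is a single coefficient of some $P_j$, hence lies in $\D_\lambda$.

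\textbf{Step 1 (polygonal cancellation for one point).} For algebraic $w\neq0$, I want $Q_w(y)=\sum_{k=0}^{L} b_k y^k$ with $b_k\in\D_\lambda\setminus\{0\}$ and $Q_w(w)=0$.
\begin{itemize}[leftmargin=2em]
\item Choose exponents $m_k$ so that the lengths $\ell_k=\lambda^{m_k}\abs{w}^k$ satisfy the polygon inequality $\max_k\ell_k\le\sum_{i\neq k}\ell_i$. For example, take $L\ge2$ and pick the $m_k$ so that the $\ell_k$ are comparable.
\item Choose the phases of $b_0,\dots,b_{L-2}$ to be $1$. The last two phases must close the polygon: we need $u,v$ with $\abs{u}=\ell_{L-1}$, $\abs{v}=\ell_L$ and $u+v=-c$, where $c=\sum_{k\le L-2}b_kw^k$ is algebraic. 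To make this solvable, arrange (by choosing the lengths) that $\abs{\ell_{L-1}-\ell_L}\le\abs{c}\le\ell_{L-1}+\ell_L$.
\item The closing vertex is an intersection of two circles with algebraic centers and algebraic radii, so it is algebraic. Indeed, $u=-\tfrac{c}{2\abs{c}^2}\bigl(\abs{c}^2+\ell_{L-1}^2-\ell_L^2\pm i\sqrt{\Delta}\bigr)$ with $\Delta$ algebraic and real.
\end{itemize}
The resulting $b_{L-1}=u/w^{L-1}$ and $b_L=v/w^L$ are algebraic with moduli $\lambda^{m_{L-1}}$ and $\lambda^{m_L}$. For $w=0$ one simply uses a power of $z$.

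\textbf{Step 2 (sparse products and multiplicity).} Products of elements of $\D_\lambda$ remain in $\D_\lambda$, since moduli $\lambda^m\lambda^{m'}=\lambda^{m+m'}$ and algebraicity are preserved. If $R_1(z^{M_1})\cdots R_t(z^{M_t})$ has rapidly increasing $M_i$, say $M_{i+1}>\sum_{l\le i}M_l\deg R_l$, then each monomial of the expansion arises uniquely, as in base-$M$ digits. Hence every coefficient is a single product of coefficients and lies in $\D_\lambda$.

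So I take $P_j(z)=\prod_{i\le j}\prod_{r=1}^{j}Q_{\alpha_i^{M_{i,r}}}\bigl(z^{M_{i,r}}\bigr)$, with distinct and rapidly growing exponents $M_{i,r}$. For $\alpha_i\neq0$, each factor $Q_{\alpha_i^{M_{i,r}}}(z^{M_{i,r}})$ vanishes at $z=\alpha_i$. Hence $P_j$ has a zero of order $\ge j$ at each $\alpha_i$ with $i\le j$ (for $\alpha_i=0$ use the factor $z^j$).

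Consequently, for fixed $s$ and $i$, $P_j^{(\sigma)}(\alpha_i)=0$ for all $\sigma\le s$ once $j>\max(s,i)$. By Leibniz, $(z^{n_j}P_j)^{(s)}(\alpha_i)=0$ for such $j$, so $f^{(s)}(\alpha_i)$ is a finite sum of algebraic numbers.

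\textbf{Step 3 (radius, transcendence, continuum).}
\begin{itemize}[leftmargin=2em]
\item \emph{Radius at least $1$.} Let $E_j$ be the largest exponent $m$ occurring among the coefficients of $P_j$. Choose $n_j$ so large that $E_j\log\lambda=o(n_j)$. Then $\limsup\abs{a_k}^{1/k}\le1$.
\item \emph{Radius exactly $1$.} Infinitely many coefficients are nonzero with modulus $\ge1$, so $\limsup\abs{a_k}^{1/k}\ge1$.
\item \emph{Transcendence.} If $f^{(s)}$ were algebraic over $\C(z)$ it would be D-finite. Its coefficients would then satisfy a recurrence $\sum_{i=0}^{r}p_i(k)c_{k+i}=0$ with $p_r(k)\neq0$ for $k$ large. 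The gaps between consecutive blocks have lengths $n_{j+1}-n_j-\deg P_j\to\infty$, so they exceed $r$ for large $j$. A run of $r$ consecutive zero coefficients at large index forces all later coefficients to vanish. That contradicts the infinitely many nonzero blocks. The same applies to every $f^{(s)}$, whose coefficients are $a_{k+s}$ times nonzero integers.
\item \emph{Continuum many.} For any $\epsilon\in\{0,1\}^{\N}$ with infinitely many ones, use $f_\epsilon=\sum_j\epsilon_jz^{n_j}P_j$. Distinct $\epsilon$ give distinct functions, and all the above properties persist.
\end{itemize}

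\textbf{Main obstacle.} I expect the delicate point to be Step 1 combined with the bookkeeping in Step 2. Step 1 must verify that lengths can always be chosen so that the two-circle closing condition holds, uniformly in the tiny numbers $w=\alpha^{M}$. I would handle this by first fixing $b_0,\dots,b_{L-2}$ and then choosing $m_{L-1}=m_L$ large enough that $2\lambda^{m_L}\abs{w}^{L-1}\ge\abs{c}$. This is not quite enough as stated: with $m_{L-1}=m_L$ the two lengths differ by the factor $\abs{w}$, so the lower condition $\abs{\ell_{L-1}-\ell_L}\le\abs{c}$ can fail when $\lambda^{m_L}$ is large. I will therefore need to fine-tune $m_{L-1}$ and $m_L$ separately so that $\ell_{L-1}$ and $\ell_L$ stay within a factor $\lambda$ of each other, adjusting $L$ if necessary.
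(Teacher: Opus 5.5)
Your architecture matches the paper's. You use lacunary factors $R(z^{N})$ whose shifted copies never overlap, so each coefficient of a product is a single product of coefficients in $\D_\lambda$. You place blocks disjointly and make them vanish to growing order at $\alpha_1,\dots,\alpha_j$, so each jet value is a finite algebraic sum. Choice sequences then give the continuum. Steps 2 and 3 are sound. Your D-finiteness argument for transcendence is a valid alternative to the paper's appeal to Mahler's gap theorem, and it needs only gap lengths tending to infinity rather than $u_n/s_n\to\infty$.

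The genuine gap is Step 1, the local annihilator, which is the heart of the proof and which you leave unresolved. Once $b_0,\dots,b_{L-2}$ are fixed, two-circle closing requires $\abs{\ell_{L-1}-\ell_L}\le\abs{c}\le\ell_{L-1}+\ell_L$. But $\ell_{L-1}$ and $\ell_L$ only run through the progressions $\abs{w}^{L-1}\lambda^{\Z_{\ge0}}$ and $\abs{w}^{L}\lambda^{\Z_{\ge0}}$, so tuning $m_{L-1},m_L$ cannot guarantee this. For example, take $L=2$, $b_0=\lambda^{m_0}$ and $\abs{w}=\lambda^{-1/3}$. The base-$\lambda$ logarithms of $\abs{c},\ell_1,\ell_2$ then have fractional parts $0,\tfrac23,\tfrac13$, so any two of these lengths differ by a factor at least $\lambda^{1/3}$. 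If $\lambda>8$ this exceeds $2$, whereas the two longest sides of a (possibly degenerate) triangle differ by a factor at most $2$. So no choice of exponents closes the polygon, and keeping $\ell_{L-1},\ell_L$ within a factor $\lambda$ of each other does not help.

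For larger $L$ you give no argument that the earlier exponents can steer $\abs{c}$ into an admissible window, and $c=0$ is not excluded. Also, ``$L\ge2$ with comparable lengths'' does not give polygon admissibility: lengths comparable only up to a factor $\lambda$ need on the order of $\lambda$ edges.

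The missing idea is to leave \emph{all} phases free and prove an algebraic polygon lemma. If positive real algebraic $\ell_0,\dots,\ell_m$ satisfy $\ell_j<\sum_{k\neq j}\ell_k$ for all $j$, then there are algebraic $\xi_j$ with $\abs{\xi_j}=\ell_j$, $\sum_j\xi_j=0$, and $\xi_0=\ell_0$. The paper proves this by induction:
\begin{itemize}
\item replace the last two lengths $a,b$ by a rational auxiliary length $\delta$ with $\max\{\abs{a-b},2C-R,0\}<\delta<\min\{a+b,R\}$, where $R$ and $C$ are the sum and maximum of the remaining lengths; this keeps the shorter tuple admissible;
\item realize the shorter tuple, then split the algebraic $\delta$-edge into algebraic vectors of lengths $a,b$ by exactly your two-circle formula.
\end{itemize}
Thus the two-circle step is applied to an edge whose length was chosen to satisfy the triangle inequalities, not to a partial sum fixed in advance.

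Now take $\ell_0=1$ and $M=\lceil\lambda\rceil$ further lengths $\lambda^{m_j}\abs{w}^{j}$ normalized into $[1,\lambda)$. Admissibility is then automatic, since each length is $<\lambda\le M$ while the others sum to at least $M$. Setting $u_j:=\xi_j/w^{j}$ gives $Q_w(y)=1+\sum_{j=1}^{M}u_jy^j$ with $Q_w(w)=0$ and $\abs{u_j}=\lambda^{m_j}$. With this in place, the rest of your plan goes through.
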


Thus a rank-one restriction on coefficient moduli is compatible with the
algebraicity of the full analytic jet at every algebraic point. The theorem
does not solve Problem~A: the coefficients are generally nonreal and their
moduli are unbounded. Rather, it isolates the decisive role of algebraic
phases when coefficient sizes are confined to a rank-one semigroup.

The construction combines an algebraic polygon lemma, which produces local
annihilating multipliers, with a sparse placement of the resulting
polynomial blocks. The algebraicity of all derivatives is built into the
successive corrections, while transcendence follows from a gap-rigidity
principle for algebraic power series.

Throughout the paper, $\lambda>1$ denotes a fixed real algebraic number, and
$\D_\lambda$ is defined by \eqref{eq:Dlambda}. For a power series $F(z)$, we
write $[z^m]F(z)$ for the coefficient of $z^m$ in its Taylor expansion about
the origin.

\section{Auxiliary results}

In this section, we collect the auxiliary ingredients needed for the
construction and the proof of transcendence. These include an algebraic
polygonal cancellation principle, the construction of polynomial blocks
with increasing vanishing multiplicities, and a gap-propagation property
of algebraic power series.

\subsection{Algebraic polygonal cancellation}

We begin with the geometric ingredient that allows an arbitrary algebraic
base $\lambda>1$.

\begin{definition}
A tuple of positive real numbers $(\ell_0,\ldots,\ell_m)$, with $m\geq2$,
is called \emph{strictly polygon-admissible} if
\begin{equation}\label{eq:polygon-inequalities}
        \ell_j<\sum_{k\neq j}\ell_k
        \qquad(0\leq j\leq m).
\end{equation}
\end{definition}

The usual polygon criterion gives a closed nondegenerate polygon with these
side lengths. We need the additional arithmetic fact that algebraic lengths
may be realized by algebraic edge vectors.

\begin{lemma}[Algebraic polygon lemma]\label{lemma:algebraic-polygon}
Let $\ell_0,\ldots,\ell_m\in\Qbar\cap\R_{>0}$, where $m\geq2$, and suppose
that the tuple is strictly polygon-admissible. Then there exist nonzero
algebraic numbers $\xi_0,\ldots,\xi_m\in\Qbar$ such that
\[
        \abs{\xi_j}=\ell_j\quad(0\leq j\leq m),
        \qquad
        \xi_0+\cdots+\xi_m=0.
\]
Moreover, one may prescribe $\xi_0=\ell_0$.
\end{lemma}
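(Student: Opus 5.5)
The plan is to induct on $m$. The base case $m=2$ is a triangle construction whose angles have algebraic cosines. For $m\geq3$, two sides are merged into one diagonal of rational length, which reduces to $m-1$. Throughout, the key arithmetic point is this: if $a,b,c\in\Qbar\cap\R_{>0}$ satisfy the strict triangle inequalities, then
\[
\cos\theta=\frac{a^2+b^2-c^2}{2ab}\in\Qbar\cap(-1,1),
\]
and $\sin\theta=\sqrt{1-\cos^2\theta}$ is algebraic. Hence $u=e^{i\theta}=\cos\theta+i\sin\theta$ is an algebraic number of modulus one.

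\emph{Base case $m=2$.} Set $\xi_0=\ell_0$. Let $\theta$ be the angle between the sides $\ell_0$ and $\ell_1$ opposite $\ell_2$, computed as above, and put $\xi_1=-\ell_1e^{-i\theta}$ and $\xi_2=-\xi_0-\xi_1$. The vectors $\xi_0$ and $-\xi_1$ have lengths $\ell_0,\ell_1$ and include angle $\theta$. By the law of cosines,
\[
\abs{\xi_2}^2=\ell_0^2+\ell_1^2-2\ell_0\ell_1\cos\theta=\ell_2^2.
\]
All three numbers are algebraic and nonzero, and $\xi_0=\ell_0$ as prescribed.

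\emph{Inductive step, $m\geq3$.} Put $S=\ell_0+\cdots+\ell_{m-2}$. I would choose a rational $d>0$ with two properties. First, $(\ell_{m-1},\ell_m,d)$ should be strictly polygon-admissible, which means $\abs{\ell_{m-1}-\ell_m}<d<\ell_{m-1}+\ell_m$. Second, $(\ell_0,\ldots,\ell_{m-2},d)$ should be strictly polygon-admissible, which means $d<S$ and $d>2\ell_j-S$ for $0\leq j\leq m-2$. So $d$ must lie in the open interval
\[
\Bigl(\max\bigl\{\abs{\ell_{m-1}-\ell_m},\ \max_{j\leq m-2}(2\ell_j-S)\bigr\},\ \min\{S,\ \ell_{m-1}+\ell_m\}\Bigr).
\]
Checking that this interval is nonempty is the main (though elementary) obstacle. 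There are four comparisons between a lower and an upper bound:
\begin{itemize}
\item $\abs{\ell_{m-1}-\ell_m}<S$ is exactly the hypothesis \eqref{eq:polygon-inequalities} for $j=m-1$ and $j=m$.
\item $2\ell_j-S<\ell_{m-1}+\ell_m$ is \eqref{eq:polygon-inequalities} for index $j$.
\item $\abs{\ell_{m-1}-\ell_m}<\ell_{m-1}+\ell_m$ holds because both lengths are positive.
\item $2\ell_j-S<S$ holds because $S$ contains $\ell_j$ together with at least one further positive term, which uses $m\geq3$.
\end{itemize}
By density of $\Q$ a rational $d$ exists, and in particular $d\in\Qbar\cap\R_{>0}$.

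By the induction hypothesis applied to $(\ell_0,\ldots,\ell_{m-2},d)$, there are nonzero algebraic $\xi_0=\ell_0,\xi_1,\ldots,\xi_{m-2}$ and $\eta$ with $\abs{\eta}=d$ and $\xi_0+\cdots+\xi_{m-2}+\eta=0$. It remains to split $-\eta$ as $\xi_{m-1}+\xi_m$ with the prescribed moduli. Let $\alpha$ be the angle of the triangle $(d,\ell_{m-1},\ell_m)$ between the sides $d$ and $\ell_{m-1}$, so $\cos\alpha=(d^2+\ell_{m-1}^2-\ell_m^2)/(2d\ell_{m-1})$ and $u=e^{i\alpha}\in\Qbar$. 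Put
\[
\xi_{m-1}=-\frac{\ell_{m-1}}{d}\,\eta\,u,\qquad \xi_m=-\eta-\xi_{m-1}.
\]
Then $\abs{\xi_{m-1}}=\ell_{m-1}$, and by the law of cosines
\[
\abs{\xi_m}^2=d^2+\ell_{m-1}^2-2d\ell_{m-1}\cos\alpha=\ell_m^2.
\]
Both numbers are algebraic and nonzero, the total sum vanishes, and $\xi_0=\ell_0$ is preserved from the inductive step. This completes the induction.
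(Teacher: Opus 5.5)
You follow the paper's proof: induction on $m$ with the strengthened claim $\xi_0=\ell_0$, a rational diagonal $d$ chosen in the same interval (your four comparisons are the paper's nonemptiness check), and a splitting step that agrees with the paper's $x_0=A+iB$ rotated by $w/c$ once you take $\cos\alpha$ as you define it.

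The inductive step as written has a sign error, and with it the final claim is false. The induction hypothesis gives $\xi_0+\cdots+\xi_{m-2}=-\eta$. To replace the side $\eta$ you therefore need $\xi_{m-1}+\xi_m=\eta$, not $-\eta$. Your definitions give $\xi_{m-1}+\xi_m=-\eta$, so
\[
\xi_0+\cdots+\xi_m=-2\eta\neq0,
\]
because $\abs{\eta}=d>0$. You appear to have carried over the base-case pattern $\xi_2=-\xi_0-\xi_1$, where you split the negative of the other sides. In the inductive step, though, $\eta$ is itself the side being replaced.

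The fix is immediate. Set $\xi_{m-1}=\frac{\ell_{m-1}}{d}\,\eta\,u$ and $\xi_m=\eta-\xi_{m-1}$. The modulus computation $\abs{\xi_m}^2=d^2\abs{1-\frac{\ell_{m-1}}{d}u}^2=\ell_m^2$ is unchanged, and the total sum now vanishes. With this correction your proof is complete and essentially the paper's.
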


\begin{proof}
We first record an algebraic splitting of a vector into two prescribed
lengths. Let $a,b,c\in\Qbar\cap\R_{>0}$ satisfy
\[
        \abs{a-b}<c<a+b,
\]
and let $w\in\Qbar$ with $\abs w=c$. Set
\[
        A:=\frac{a^2+c^2-b^2}{2c},
        \qquad
        B:=\sqrt{a^2-A^2}>0.
\]
The strict triangle inequalities imply $a^2-A^2>0$, so $A,B\in\Qbar\cap \R$.
The numbers
\[
        x_0:=A+iB,
        \qquad
        y_0:=c-x_0
\]
satisfy $x_0+y_0=c$, $\abs{x_0}=a$, and $\abs{y_0}=b$. Hence
\begin{equation}\label{eq:split-vector}
        x:=\frac{w}{c}x_0,
        \qquad
        y:=\frac{w}{c}y_0
\end{equation}
satisfy
\[
        x+y=w,
        \qquad
        \abs x=a,
        \qquad
        \abs y=b,
\]
and both $x$ and $y$ are algebraic.

We proceed by induction on $m+1$. If $m=2$, apply the preceding construction
with $a=\ell_1$, $b=\ell_2$, $c=\ell_0$, and $w=-\ell_0$. Then
$\xi_0:=\ell_0$ together with the resulting vectors $\xi_1,\xi_2$ has the
required properties.

Assume $m\geq3$ and that the statement holds for $m$ lengths. Put
\[
        a:=\ell_{m-1},
        \qquad
        b:=\ell_m,
        \qquad
        R:=\sum_{j=0}^{m-2}\ell_j,
        \qquad
        C:=\max_{0\leq j\leq m-2}\ell_j.
\]
Consider the interval
\begin{equation}\label{eq:delta-interval}
        \max\{\abs{a-b},\,2C-R,\,0\}
        <\delta<
        \min\{a+b,\,R\}.
\end{equation}
It is nonempty. Indeed, the polygon inequality for the larger of $a$ and
$b$ gives $\abs{a-b}<R$, while $\abs{a-b}<a+b$ is immediate. The polygon
inequality for $C$ gives $2C-R<a+b$. Moreover, at least two positive lengths
occur in the sum defining $R$, so $C<R$ and therefore $2C-R<R$. We may thus
choose a positive rational number $\delta$ satisfying
\eqref{eq:delta-interval}.

The reduced tuple
\[
        (\ell_0,\ldots,\ell_{m-2},\delta)
\]
is strictly polygon-admissible. The inequality $\delta<R$ is part of
\eqref{eq:delta-interval}, and, for $0\leq j\leq m-2$,
\[
        2\ell_j-R\leq2C-R<\delta,
\]
which is equivalent to
\[
        \ell_j<(R-\ell_j)+\delta.
\]
By the induction hypothesis there exist algebraic vectors
$\eta_0,\ldots,\eta_{m-2},w$ with respective moduli
$\ell_0,\ldots,\ell_{m-2},\delta$, such that
\[
        \eta_0+\cdots+\eta_{m-2}+w=0,
        \qquad
        \eta_0=\ell_0.
\]
The inequalities $\abs{a-b}<\delta<a+b$ allow us to apply
\eqref{eq:split-vector} and write $w=x+y$ with algebraic $x,y$ satisfying
$\abs x=a$ and $\abs y=b$. Replacing $w$ by $x,y$ completes the induction.
\end{proof}

\begin{remark}\label{remark:number-of-sides}
Let $M:=\lceil\lambda\rceil$. If
\[
        1,r_1,\ldots,r_M\in[1,\lambda),
\]
then each entry is strictly smaller than $\lambda\leq M$, whereas the sum
of the remaining $M$ entries is at least $M$. Hence the tuple
$(1,r_1,\ldots,r_M)$ is strictly polygon-admissible, and
Lemma~\ref{lemma:algebraic-polygon} applies uniformly.
\end{remark}

We now turn polygonal closure into a coefficient-preserving local
annihilator.

\begin{lemma}[$\lambda$-geometric annihilating multiplier]\label{lemma:factor}
Let $M:=\lceil\lambda\rceil$. Let
\[
        H(z)=\sum_{k=0}^{d}c_kz^k\in\Qbar[z]
\]
satisfy $H(0)=1$ and $c_k\in\D_\lambda$ for every $k$. Let
$\alpha\in\Qbar$ with $0<\abs\alpha<1$. Then, for every integer $N>d$,
there exist $u_1,\ldots,u_M\in\Qbar$ such that
\begin{equation}\label{eq:factor-form}
        Q(z):=1+\sum_{j=1}^{M}u_jz^{jN}
\end{equation}
satisfies the following properties:
\begin{enumerate}[label=\textup{(\alph*)}]
\item $Q(\alpha)=0$;
\item $\abs{u_j}=\lambda^{m_j}$ for suitable integers $m_j\geq0$;
\item every coefficient of $H(z)Q(z)$ belongs to $\D_\lambda$.
\end{enumerate}
\end{lemma}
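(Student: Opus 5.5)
We need $1+\sum_{j=1}^M u_j\alpha^{jN}=0$ with $|u_j|=\lambda^{m_j}$. Set $\beta:=\alpha^N$, so $0<|\beta|<1$ and $\beta$ is algebraic. The plan is to apply Lemma~\ref{lemma:algebraic-polygon} to the tuple $(1,r_1,\ldots,r_M)$, with each $r_j\in[1,\lambda)$ chosen as follows. For each $j$, pick the integer $m_j\geq0$ such that $r_j:=\lambda^{m_j}\abs{\beta}^{j}\in[1,\lambda)$. This is possible because $\abs\beta^j<1$: take $m_j$ to be the unique integer with $\lambda^{m_j}\abs\beta^j\in[1,\lambda)$, namely $m_j=\lceil -j\log\abs\beta/\log\lambda\rceil$, adjusted to land in the half-open interval. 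Then $m_j\geq0$. Each $r_j$ is a positive real algebraic number, since $\abs\beta^2=\beta\bar\beta$ is algebraic and so $\abs\beta$ is algebraic.

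By Remark~\ref{remark:number-of-sides}, the tuple $(1,r_1,\ldots,r_M)$ is strictly polygon-admissible. Lemma~\ref{lemma:algebraic-polygon}, with $\xi_0=1$ prescribed, then gives algebraic $\xi_j$ with $\abs{\xi_j}=r_j$ and $1+\sum_j\xi_j=0$. Define $u_j:=\xi_j\beta^{-j}$. These are algebraic, and $\abs{u_j}=r_j\abs\beta^{-j}=\lambda^{m_j}$. Moreover $Q(\alpha)=1+\sum_j u_j\beta^j=1+\sum_j\xi_j=0$, which proves (a) and (b).

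For (c), write $H(z)Q(z)=H(z)+\sum_{j=1}^M u_jz^{jN}H(z)$. The block $u_jz^{jN}H(z)$ occupies degrees $jN,\ldots,jN+d$. Since $N>d$, these ranges are pairwise disjoint for $j=0,1,\ldots,M$, where $j=0$ corresponds to $H$ itself. So every coefficient of $HQ$ is either $0$, some $c_k$, or some $u_jc_k$. Each of these lies in $\D_\lambda$: either $c_k=0$, or $\abs{u_jc_k}=\lambda^{m_j+m}$ with $m_j+m\geq0$, and $u_jc_k$ is algebraic.

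The only delicate step is the size normalization $r_j\in[1,\lambda)$. It is what makes the polygon inequalities automatic for every $\alpha$ and every $N$, and it relies on $\abs\beta<1$ to guarantee $m_j\geq0$. The hypothesis $H(0)=1$ is not needed for this argument, beyond keeping the constant term intact.
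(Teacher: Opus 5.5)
Your proposal is correct and is essentially the paper's proof: you normalize $\lambda^{m_j}\abs{\alpha}^{jN}$ into $[1,\lambda)$, invoke Remark~\ref{remark:number-of-sides} and Lemma~\ref{lemma:algebraic-polygon} with the first vector prescribed to be $1$, set $u_j=\xi_j\alpha^{-jN}$, and use $N>d$ to get disjoint supports for the shifted copies of $H$. Two small remarks: $m_j=\lceil -j\log\abs\beta/\log\lambda\rceil$ already lands in $[1,\lambda)$ without any further adjustment, and you are right that $H(0)=1$ is not used in this lemma.
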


\begin{proof}
Write $\rho:=\abs\alpha$. Since complex conjugation preserves algebraicity,
\[
        \rho=\sqrt{\alpha\overline\alpha}\in\Qbar\cap(0,1).
\]
For each $j\in\{1,\ldots,M\}$, choose the least integer $m_j\geq0$ such that
\[
        r_j:=\lambda^{m_j}\rho^{jN}\geq1.
\]
Since $\rho^{jN}<1$, one has $m_j\geq1$, and minimality gives
\begin{equation}\label{eq:normalized-lengths}
        1\leq r_j<\lambda.
\end{equation}
By Remark~\ref{remark:number-of-sides}, the tuple
$(1,r_1,\ldots,r_M)$ is strictly polygon-admissible. Applying
Lemma~\ref{lemma:algebraic-polygon} with the first vector prescribed to be
$1$, we obtain $x_1,\ldots,x_M\in\Qbar$ such that
\begin{equation}\label{eq:polygon-cancellation}
        1+x_1+\cdots+x_M=0,
        \qquad
        \abs{x_j}=r_j.
\end{equation}
Set
\[
        u_j:=\frac{x_j}{\alpha^{jN}}.
\]
Then $u_j\in\Qbar$ and
\[
        \abs{u_j}=\frac{r_j}{\rho^{jN}}=\lambda^{m_j}.
\]
Moreover, \eqref{eq:polygon-cancellation} yields
\[
        Q(\alpha)
        =1+\sum_{j=1}^{M}u_j\alpha^{jN}
        =1+\sum_{j=1}^{M}x_j
        =0.
\]

With $u_0:=1$, one has
\[
        H(z)Q(z)=\sum_{j=0}^{M}u_jz^{jN}H(z).
\]
Because $N>d$, the intervals $[jN,jN+d]$, $0\leq j\leq M$, are pairwise
disjoint. Hence no addition of nonzero coefficients occurs. Every nonzero
coefficient of $HQ$ has the form $u_jc_k$. If
$\abs{c_k}=\lambda^{e_k}$, then
\[
        \abs{u_jc_k}=\lambda^{m_j+e_k}.
\]
Thus every coefficient of $HQ$ belongs to $\D_\lambda$.
\end{proof}

\subsection{Polynomial blocks with increasing multiplicities}

Fix an enumeration without repetitions
\begin{equation}\label{eq:enumeration}
        \{\alpha_1,\alpha_2,\ldots\}
        =\bigl(\Qbar\cap\B\bigr)\setminus\{0\}.
\end{equation}

\begin{proposition}\label{proposition:blocks}
There exists a sequence $(H_n)_{n\geq0}$ of polynomials in $\Qbar[z]$ such
that:
\begin{enumerate}[label=\textup{(\roman*)}]
\item $H_0=1$ and $H_n(0)=1$ for every $n\geq1$;
\item every coefficient of every $H_n$ belongs to $\D_\lambda$;
\item $H_{n-1}$ divides $H_n$ for every $n\geq1$;
\item for every $n\geq1$ and every $1\leq i\leq n$,
\begin{equation}\label{eq:multiplicity}
        \ord_{z=\alpha_i}H_n(z)\geq n-i+1.
\end{equation}
\end{enumerate}
\end{proposition}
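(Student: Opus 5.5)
We build $(H_n)$ recursively, obtaining each new block by repeatedly applying Lemma~\ref{lemma:factor}. Set $H_0:=1$; it clearly satisfies (i) and (ii). Suppose $H_{n-1}$ has been built with $H_{n-1}(0)=1$, coefficients in $\D_\lambda$, and $\ord_{z=\alpha_i}H_{n-1}\geq (n-1)-i+1$ for $1\le i\le n-1$. To reach $H_n$ we must raise the multiplicity at each $\alpha_i$ with $1\le i\le n$ by exactly one. Each $\alpha_i$ with $i\le n-1$ already has multiplicity $n-i$ and needs $n-i+1$, while $\alpha_n$ has multiplicity $\ge 0$ and needs $1$. Hence it suffices to multiply $H_{n-1}$ successively by $n$ polynomials $Q^{(1)},\dots,Q^{(n)}$ with $Q^{(i)}(\alpha_i)=0$.

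We produce these factors by iteration. Put $G_0:=H_{n-1}$. Given $G_{i-1}$ with $G_{i-1}(0)=1$, coefficients in $\D_\lambda$ and degree $d_{i-1}$, choose an integer $N_i>d_{i-1}$. Lemma~\ref{lemma:factor} applied with $H=G_{i-1}$ and $\alpha=\alpha_i$ is legitimate, since $0<\abs{\alpha_i}<1$ by \eqref{eq:enumeration}. It yields $Q^{(i)}(z)=1+\sum_j u_j z^{jN_i}$ with $Q^{(i)}(\alpha_i)=0$ and with every coefficient of $G_i:=G_{i-1}Q^{(i)}$ in $\D_\lambda$. Also $G_i(0)=G_{i-1}(0)Q^{(i)}(0)=1$. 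After $n$ steps we set $H_n:=G_n$.

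We then verify the four properties. Properties (i) and (ii) hold by construction, and (iii) holds because $H_n=H_{n-1}\prod_i Q^{(i)}$. For (iv), orders of vanishing are additive under multiplication, so $\ord_{\alpha_i}H_n\ge\ord_{\alpha_i}H_{n-1}+\ord_{\alpha_i}Q^{(i)}\ge (n-i)+1$ for $i\le n-1$, and $\ord_{\alpha_n}H_n\ge 1$.

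The only delicate point is maintaining the coefficient condition (ii) under multiplication, since products of polynomials with coefficients in $\D_\lambda$ generally leave $\D_\lambda$. The non-overlapping spacing built into Lemma~\ref{lemma:factor} (the requirement $N>d$) handles this. The key is to apply the lemma to the full current product $G_{i-1}$ at each step, not to $H_{n-1}$ alone, choosing $N_i$ larger than the current degree each time. No cancellation is ever needed beyond what the lemma provides.
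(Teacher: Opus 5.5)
Your proposal is correct and follows the paper's proof essentially verbatim. It uses the same outer induction on $n$, the same inner iteration applying Lemma~\ref{lemma:factor} to the running product $G_{n,j-1}$ and the point $\alpha_j$ with $N_{n,j}>\deg G_{n,j-1}$, and the same additivity-of-orders argument for (iv). One wording nit: multiplicities need only rise by \emph{at least} one, and $H_{n-1}$ has multiplicity at least (not exactly) $n-i$ at $\alpha_i$.
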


\begin{proof}
We proceed by induction on $n$. For $n=0$, set $H_0:=1$. All the required
properties are then immediate.

Fix $n\geq1$ and suppose that $H_{n-1}$ has already been constructed and
satisfies the stated properties. We construct $H_n$ by means of an
auxiliary induction indexed by the points
\[
        \alpha_1,\ldots,\alpha_n.
\]

Set
\[
        G_{n,0}:=H_{n-1}.
\]
Suppose that, for some $j\in\{1,\ldots,n\}$, the polynomial
$G_{n,j-1}\in\Qbar[z]$ has already been constructed, satisfies
\[
        G_{n,j-1}(0)=1,
\]
and has all its coefficients in $\D_\lambda$. Choose an integer
\[
        N_{n,j}>\deg G_{n,j-1}.
\]
Applying Lemma~\ref{lemma:factor} to the pair
$(G_{n,j-1},\alpha_j)$ with exponent $N_{n,j}$, we obtain a polynomial
\[
        Q_{n,j}(z)
        =
        1+\sum_{\nu=1}^{M}
        u_{n,j,\nu}z^{\nu N_{n,j}}
        \in\Qbar[z]
\]
such that
\[
        Q_{n,j}(\alpha_j)=0,
        \qquad
        Q_{n,j}(0)=1,
\]
every nonzero coefficient of $Q_{n,j}$ has modulus equal to a
nonnegative integral power of $\lambda$, and every coefficient of
\[
        G_{n,j}(z)
        :=
        G_{n,j-1}(z)Q_{n,j}(z)
\]
belongs to $\D_\lambda$.

The inequality
\[
        N_{n,j}>\deg G_{n,j-1}
\]
ensures that the shifted copies of $G_{n,j-1}$ occurring in the product
$G_{n,j-1}Q_{n,j}$ have pairwise disjoint supports. Consequently, no
addition of distinct nonzero coefficients occurs, which is precisely the
mechanism that preserves the coefficient condition in
Lemma~\ref{lemma:factor}. Moreover,
\[
        G_{n,j}(0)
        =
        G_{n,j-1}(0)Q_{n,j}(0)
        =
        1.
\]
Thus the auxiliary induction can be continued through
$j=1,\ldots,n$.

After completing the $n$ auxiliary steps, define
\[
        H_n:=G_{n,n}.
\]
By construction,
\[
        H_n
        =
        H_{n-1}\prod_{j=1}^{n}Q_{n,j}.
\]
It follows immediately that $H_{n-1}$ divides $H_n$. Furthermore, since
every factor $Q_{n,j}$ has constant term $1$,
\[
        H_n(0)
        =
        H_{n-1}(0)\prod_{j=1}^{n}Q_{n,j}(0)
        =
        1.
\]
The repeated application of Lemma~\ref{lemma:factor} also shows that every
coefficient of $H_n$ belongs to $\D_\lambda$.

It remains to prove the multiplicity estimate. Fix
$i\in\{1,\ldots,n\}$. Since
\[
        Q_{n,i}(\alpha_i)=0,
\]
we have
\[
        \ord_{z=\alpha_i}Q_{n,i}(z)\geq1.
\]
Every other factor has nonnegative order of vanishing at $\alpha_i$.
Therefore,
\begin{align*}
        \ord_{z=\alpha_i}H_n(z)
        &=
        \ord_{z=\alpha_i}H_{n-1}(z)
        +
        \sum_{j=1}^{n}
        \ord_{z=\alpha_i}Q_{n,j}(z)\\
        &\geq
        \ord_{z=\alpha_i}H_{n-1}(z)+1.
\end{align*}

If $i\leq n-1$, then the induction hypothesis gives
\[
        \ord_{z=\alpha_i}H_{n-1}(z)
        \geq
        (n-1)-i+1
        =
        n-i,
\]
and hence
\[
        \ord_{z=\alpha_i}H_n(z)
        \geq
        n-i+1.
\]
If $i=n$, the factor $Q_{n,n}$ alone gives
\[
        \ord_{z=\alpha_n}H_n(z)
        \geq1
        =
        n-n+1.
\]
Thus
\[
        \ord_{z=\alpha_i}H_n(z)\geq n-i+1
\]
for every $1\leq i\leq n$, which proves
\eqref{eq:multiplicity} and completes the induction.
\end{proof}

\subsection{Strongly lacunary power series}

As a last tool, we shall need the following classical theorem of Mahler
\cite[Chapter~II]{mahler1976}.

\begin{theorem}[Gap rigidity for algebraic power series]\label{theorem:mahler-lacunary}
Let
\[
        F(z)=\sum_{k\geq0}a_kz^k
\]
be a power series with positive radius of convergence. Suppose that there exist sequences of positive
integers $(s_n)$ and $(u_n)$ such that
\[
        s_n<u_n\leq s_{n+1},
\]
\[
       a_{s_n}a_{u_n}\neq 0, \quad a_k=0
        \qquad(s_n<k<u_n),
\]
and $u_n/s_n$ tends to infinity as $n\to \infty$. Then $F$ is transcendental over $\C(z)$.
\end{theorem}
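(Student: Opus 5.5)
We argue by contradiction using polynomial truncations of $F$. The whole argument is formal, in $\C[[z]]$; the positive radius of convergence is not needed.

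\textbf{Setup.} Suppose $F$ is algebraic over $\C(z)$. After clearing denominators, there is a nonzero polynomial $P(z,Y)=\sum_{i=0}^{d}p_i(z)Y^i\in\C[z][Y]$ with $d\geq1$ and $P(z,F(z))=0$. We may take $P$ to be a nonzero multiple of the minimal polynomial of $F$. Put $h:=\max_i\deg p_i$.

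For each $n$, define the truncation
\[
        F_n(z):=\sum_{k=0}^{s_n}a_kz^k\in\C[z].
\]
Since $a_{s_n}\neq0$, we have $\deg F_n=s_n$. The gap hypothesis gives
\[
        F(z)-F_n(z)=z^{u_n}G_n(z),\qquad G_n(0)=a_{u_n}\neq0,
\]
so $F-F_n$ has order exactly $u_n$ at $z=0$.

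\textbf{Key identity.} Because $Y-Y'$ divides $P(z,Y)-P(z,Y')$ in $\C[z][Y,Y']$, we can write
\[
        P(z,F_n)=P(z,F_n)-P(z,F)=(F_n-F)\,R_n(z)
\]
for some $R_n\in\C[[z]]$. Hence the formal power series $P(z,F_n(z))$, which is in fact a polynomial, either vanishes identically or has order at least $u_n$ at the origin. On the other hand,
\[
        \deg P(z,F_n(z))\leq h+d\,s_n .
\]
A nonzero polynomial of degree $D$ has order at most $D$ at $0$. So if $P(z,F_n)\neq0$, then
\[
        u_n\leq h+d\,s_n .
\]
Since $s_n\to\infty$ and $u_n/s_n\to\infty$, this inequality fails for all large $n$. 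Therefore $P(z,F_n(z))=0$ for every sufficiently large $n$.

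\textbf{Conclusion.} For large $n$, each polynomial $F_n$ is a root in the field $\C(z)$ of the polynomial $P(z,Y)$, whose degree in $Y$ is $d\geq1$. Such a polynomial has at most $d$ roots in $\C(z)$. However, the $F_n$ are pairwise distinct: the hypotheses give $s_n<u_n\leq s_{n+1}$, so $(s_n)$ is strictly increasing, and $\deg F_n=s_n$. This contradiction shows that $F$ is transcendental over $\C(z)$.

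The only step requiring care is the order estimate for $P(z,F_n)$. It rests on the exact valuation of $F-F_n$, which uses $a_{u_n}\neq0$ together with the gap $a_k=0$ for $s_n<k<u_n$. The remaining steps are routine degree counting.
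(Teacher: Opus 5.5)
Your argument is correct and complete. The paper gives no proof of this statement; it cites it as a classical result from Mahler's \emph{Lectures on Transcendental Numbers} (Chapter~II) and only applies it when proving that $f^{(s)}$ is transcendental. You supply a self-contained function-field Liouville argument. The truncations $F_n$ approximate $F$ to order at least $u_n$ at the origin, while $P(z,F_n)$ is a polynomial of degree at most $h+ds_n$. Hence $P(z,F_n)=0$ as soon as $u_n>h+ds_n$, and a nonzero polynomial in $Y$ over $\C(z)$ cannot have infinitely many distinct roots $F_n$. Citing Mahler keeps the paper short. Your route makes the paper self-contained, and it also shows the stated hypotheses are stronger than needed. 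First, the argument is purely formal, so positive radius of convergence plays no role, as you note. Second, only $\ord_0(F-F_n)\geq u_n$ is used, and this follows from the gap $a_k=0$ for $s_n<k<u_n$ alone. So $a_{u_n}\neq0$ is superfluous, and your closing remark that the step rests on the \emph{exact} valuation of $F-F_n$ overstates what is needed; only $a_{s_n}\neq0$ matters, to make the $F_n$ distinct. Third, the condition $u_n\leq s_{n+1}$ serves only to make $(s_n)$ strictly increasing. Since the $F_n$ are then pairwise distinct for all $n$, it suffices that $u_n/s_n$ be unbounded rather than tend to infinity.
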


\section{The proof of Theorem \ref{theorem:main}}

Let $(H_n)_{n\geq0}$ be supplied by Proposition~\ref{proposition:blocks},
and write
\[
        d_n:=\deg H_n
        \qquad(n\geq1).
\]
For $n\geq1$, put
\[
        r_n:=\frac{n}{n+1},
        \qquad
        M_n:=\max_{\abs z\leq r_n}\abs{H_n(z)}.
\]
Choose recursively a sequence of positive integers $(t_n)_{n\geq1}$ such that
\begin{equation}\label{eq:placement-convergence}
        r_n^{t_n}M_n\leq 2^{-n}
        \qquad(n\geq1),
\end{equation}
and
\begin{equation}\label{eq:block-separation}
        t_{n+1}\geq (n+1)(t_n+d_n)
        \qquad(n\geq1).
\end{equation}
Such a choice is possible. Indeed, since $0<r_n<1$, one has
\[
        r_n^tM_n\longrightarrow0
        \qquad(t\to\infty)
\]
for every fixed $n$. Thus, after choosing $t_1$ sufficiently large to
satisfy \eqref{eq:placement-convergence}, and having chosen $t_n$, we may
choose $t_{n+1}$ sufficiently large so that both
\eqref{eq:placement-convergence}, with $n+1$ in place of $n$, and
\eqref{eq:block-separation} are satisfied.

Set
\[
        \Omega:=\{-1,1\}^{\N}.
\]
For $\boldsymbol\varepsilon=(\varepsilon_1,\varepsilon_2,\ldots)\in\Omega$,
define
\begin{equation}\label{eq:function}
        f_{\boldsymbol\varepsilon}(z)
        :=\sum_{n=1}^{\infty}
        \varepsilon_nz^{t_n}H_n(z).
\end{equation}

\begin{proposition}\label{proposition:analytic}
For every $\boldsymbol\varepsilon\in\Omega$, the series
\eqref{eq:function} defines a function analytic in $\B$. Its Taylor
coefficients belong to $\D_\lambda$, and its radius of convergence is
exactly $1$.
\end{proposition}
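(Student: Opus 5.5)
The plan has three parts: analyticity, the coefficient condition, and the radius of convergence.

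\emph{Analyticity.} I will show that the series \eqref{eq:function} converges uniformly on every compact subset of $\B$. Fix $0<r<1$ and choose $n_0$ with $r_{n_0}\geq r$. Since $r_n$ increases, for $n\geq n_0$ and $\abs z\leq r$ we have $\abs z\leq r_n$. Then \eqref{eq:placement-convergence} gives
\[
\abs{\varepsilon_n z^{t_n}H_n(z)}\leq r_n^{t_n}M_n\leq 2^{-n}.
\]
The Weierstrass M-test then shows that $f_{\boldsymbol\varepsilon}$ is analytic in $\B$. Its Taylor coefficients are obtained by summing the coefficients of the individual blocks. This uses uniform convergence on $\abs z\leq r$ together with Cauchy's formula.

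\emph{Coefficients.} The key point is that the supports of the blocks are disjoint. The $n$-th block $\varepsilon_n z^{t_n}H_n(z)$ is supported on $[t_n,t_n+d_n]$. By \eqref{eq:block-separation}, $t_{n+1}\geq (n+1)(t_n+d_n)>t_n+d_n$, so these intervals are pairwise disjoint and strictly increasing. Consequently every Taylor coefficient of $f_{\boldsymbol\varepsilon}$ is either $0$ or $\pm c$, where $c$ is a coefficient of some $H_n$. By Proposition~\ref{proposition:blocks}(ii) such a $c$ lies in $\D_\lambda$. Since $\D_\lambda$ is closed under negation (the modulus is unchanged), every coefficient $a_k$ lies in $\D_\lambda$.

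\emph{Radius of convergence.} Analyticity in $\B$ gives radius at least $1$. For the reverse inequality I use that infinitely many coefficients are nonzero with modulus at least $1$. By Proposition~\ref{proposition:blocks}(i), $H_n(0)=1$, so $a_{t_n}=\varepsilon_n$ and $\abs{a_{t_n}}=1$ for every $n$. As $t_n\to\infty$, this gives $\limsup\abs{a_k}^{1/k}\geq1$, so the radius is at most $1$ by Cauchy--Hadamard. Alternatively, every nonzero element of $\D_\lambda$ has modulus $\lambda^m\geq1$, and infinitely many coefficients are nonzero.

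\emph{Main obstacle.} The only step needing care is the disjointness of the block supports. It is needed so that coefficients are never added together, which is what keeps them inside $\D_\lambda$. It follows immediately from \eqref{eq:block-separation}.
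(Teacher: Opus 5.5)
Your proposal is correct and follows essentially the same route as the paper: the Weierstrass $M$-test via $r_n\to1$ and \eqref{eq:placement-convergence}, disjointness of the block supports from \eqref{eq:block-separation} to keep every coefficient in $\D_\lambda$, and the markers $a_{t_n}=\varepsilon_n$ with Cauchy--Hadamard to pin the radius at exactly $1$. Your alternative remark, that every nonzero element of $\D_\lambda$ has modulus at least $1$, is a valid shortcut for the upper bound on the radius that the paper does not use.
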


\begin{proof}
Fix
\[
        \boldsymbol\varepsilon
        =(\varepsilon_1,\varepsilon_2,\ldots)\in\Omega.
\]
We first prove that \eqref{eq:function} converges locally uniformly in
$\B$. Let $R\in(0,1)$. Since $r_n\to1$, there exists $n_0=n_0(R)$ such
that $R\leq r_n$ for every $n\geq n_0$. Hence, if $\abs z\leq R$ and
$n\geq n_0$, then
\[
        \abs{\varepsilon_nz^{t_n}H_n(z)}
        \leq
        r_n^{t_n}\max_{\abs w\leq r_n}\abs{H_n(w)}
        =
        r_n^{t_n}M_n
        \leq2^{-n}.
\]
The Weierstrass $M$-test therefore gives uniform convergence on the closed
disk $\overline{B}(0,R)$. Since $R<1$ was arbitrary, the convergence is
uniform on every compact subset of $\B$, and the locally uniform limit
$f_{\boldsymbol\varepsilon}$ is analytic there.

Write
\[
        H_n(z)=\sum_{j=0}^{d_n}c_{n,j}z^j.
\]
The support of the $n$th shifted block is contained in
\[
        I_n:=[t_n,t_n+d_n].
\]
By \eqref{eq:block-separation},
\[
        \max I_n=t_n+d_n<t_{n+1}=\min I_{n+1},
\]
so the intervals $I_n$ are pairwise disjoint. Consequently, each Taylor
coefficient $a_k$ of $f_{\boldsymbol\varepsilon}$ is either zero or has the
form
\[
        a_k=\varepsilon_nc_{n,j}
\]
for uniquely determined $n\geq1$ and $0\leq j\leq d_n$ with
$k=t_n+j$. Proposition~\ref{proposition:blocks} gives
$c_{n,j}\in\D_\lambda$, and multiplication by a sign does not change its
modulus. Hence
\[
        a_k\in\D_\lambda
        \qquad(k\geq0).
\]

Since $H_n(0)=1$, the coefficient at the first exponent of the $n$th block
is
\begin{equation}\label{eq:marker}
        [z^{t_n}]f_{\boldsymbol\varepsilon}
        =
        \varepsilon_n.
\end{equation}
The separation condition implies that $t_n\to\infty$. Thus, writing
$f_{\boldsymbol\varepsilon}(z)=\sum_{k\geq0}a_kz^k$, we have
$\abs{a_{t_n}}=1$ for every $n$, and therefore
\[
        \limsup_{k\to\infty}\abs{a_k}^{1/k}
        \geq
        \limsup_{n\to\infty}\abs{a_{t_n}}^{1/t_n}
        =1.
\]
If $\mathcal R$ denotes the radius of convergence, the
Cauchy--Hadamard formula yields $\mathcal R\leq1$. On the other hand, the
local uniform convergence in $\B$ gives $\mathcal R\geq1$. Hence
$\mathcal R=1$.
\end{proof}

\begin{proposition}[Algebraic jets]\label{proposition:values}
For every integer $s\geq0$ and every
$\boldsymbol\varepsilon\in\Omega$,
\[
        f_{\boldsymbol\varepsilon}^{(s)}
        \bigl(\Qbar\cap\B\bigr)
        \subseteq\Qbar.
\]
\end{proposition}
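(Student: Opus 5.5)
We split into the point $0$ and the points $\alpha_i$ of the enumeration \eqref{eq:enumeration}.

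\emph{The point $0$.} We have $f_{\boldsymbol\varepsilon}^{(s)}(0)=s!\,a_s$. By Proposition~\ref{proposition:analytic}, $a_s$ lies in $\D_\lambda\subseteq\Qbar$, so this value is algebraic.

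\emph{A point $\alpha=\alpha_i$ with $i\geq1$.} Fix $s\geq0$. Since the series \eqref{eq:function} converges locally uniformly in $\B$ (Proposition~\ref{proposition:analytic}), Weierstrass's theorem allows us to differentiate term by term. This gives
\[
        f_{\boldsymbol\varepsilon}^{(s)}(\alpha)
        =\sum_{n\geq1}\varepsilon_n\bigl(z^{t_n}H_n(z)\bigr)^{(s)}\big|_{z=\alpha}.
\]
The key step is to show that all but finitely many terms vanish. By \eqref{eq:multiplicity}, $\ord_{z=\alpha_i}H_n\geq n-i+1$ for every $n\geq i$. Hence, whenever $n\geq i+s$, the polynomial $z^{t_n}H_n(z)$ vanishes to order at least $s+1$ at $\alpha_i$. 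Its $s$-th derivative therefore vanishes at $\alpha_i$. The infinite sum thus reduces to the finite sum over $1\leq n<i+s$.

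For these finitely many $n$, the polynomial $z^{t_n}H_n(z)$ has coefficients in $\D_\lambda\subseteq\Qbar$ by Proposition~\ref{proposition:blocks}(ii). Its $s$-th derivative is again a polynomial with algebraic coefficients. Evaluating it at the algebraic point $\alpha_i$ gives an algebraic number, and multiplying by the sign $\varepsilon_n=\pm1$ keeps it algebraic. A finite sum of algebraic numbers is algebraic, so $f_{\boldsymbol\varepsilon}^{(s)}(\alpha_i)\in\Qbar$.

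The only point needing care is the interchange of differentiation and summation, and the local uniform convergence established in Proposition~\ref{proposition:analytic} handles it. Nothing else is an obstacle: the growth of the multiplicities in Proposition~\ref{proposition:blocks} was designed exactly to make the tail vanish at every order of differentiation.
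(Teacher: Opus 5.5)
Your proposal is correct and follows essentially the same route as the paper. Both arguments use termwise differentiation justified by local uniform convergence, kill the tail $n\geq i+s$ with the multiplicity bound \eqref{eq:multiplicity}, and treat the origin separately via $f_{\boldsymbol\varepsilon}^{(s)}(0)=s!\,a_s$.
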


\begin{proof}
Local uniform convergence permits termwise differentiation of
\eqref{eq:function}. Fix $s\geq0$ and a nonzero algebraic point
$\alpha_i\in\B$. By Proposition~\ref{proposition:blocks}, for every
$n\geq i+s$,
\[
        \ord_{z=\alpha_i}H_n(z)
        \geq n-i+1
        \geq s+1.
\]
Since $\alpha_i\neq0$, multiplication by $z^{t_n}$ does not change the
order of vanishing at $\alpha_i$. Therefore
\[
        \left.
        \frac{d^s}{dz^s}\bigl(z^{t_n}H_n(z)\bigr)
        \right|_{z=\alpha_i}
        =0
        \qquad(n\geq i+s).
\]
It follows that
\begin{equation}\label{eq:finite-jet-sum}
        f_{\boldsymbol\varepsilon}^{(s)}(\alpha_i)
        =
        \sum_{n=1}^{i+s-1}
        \varepsilon_n
        \left.
        \frac{d^s}{dz^s}\bigl(z^{t_n}H_n(z)\bigr)
        \right|_{z=\alpha_i},
\end{equation}
where an empty sum is interpreted as zero. The right-hand side is a finite
sum of algebraic numbers and is therefore algebraic.

At the origin,
\[
        f_{\boldsymbol\varepsilon}^{(s)}(0)
        =
        s!\,[z^s]f_{\boldsymbol\varepsilon}
        \in\Qbar.
\]
\end{proof}

\begin{proposition}\label{proposition:transcendence}
For every integer $s\geq 0$ and every
$\boldsymbol{\varepsilon}\in\Omega$, the function
$f_{\boldsymbol{\varepsilon}}^{(s)}$ is transcendental over $\C(z)$.
\end{proposition}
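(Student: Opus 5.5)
We plan to apply Theorem~\ref{theorem:mahler-lacunary} directly to $F:=f_{\boldsymbol\varepsilon}^{(s)}$, using the gaps between consecutive blocks in \eqref{eq:function}. Write $f_{\boldsymbol\varepsilon}(z)=\sum_k a_kz^k$. Then
\[
[z^k]f_{\boldsymbol\varepsilon}^{(s)}=(k+1)(k+2)\cdots(k+s)\,a_{k+s}.
\]
So the coefficient of $z^k$ in $F$ vanishes exactly when $a_{k+s}=0$, since the combinatorial factor is positive. Thus $F$ has the same support as $f_{\boldsymbol\varepsilon}$, shifted down by $s$. We then discard the indices $k<0$, which matters only for the finitely many early blocks. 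Also, $F$ has radius of convergence $1>0$ by Proposition~\ref{proposition:analytic}, since differentiation preserves the radius.

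Next we identify the gap sequences. By \eqref{eq:marker}, $a_{t_n}=\varepsilon_n\neq0$, and the support of the $n$th block lies in $I_n=[t_n,t_n+d_n]$. Let $e_n:=\max\{k\in I_n: a_k\neq0\}$, which equals $t_n+d_n$ since $H_n$ has degree $d_n$ with nonzero leading coefficient. By the disjointness of the intervals $I_n$, we have $a_k=0$ for $e_n<k<t_{n+1}$.

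For $n$ large enough that $t_n>s$, set
\[
s_n:=t_n+d_n-s,\qquad u_n:=t_{n+1}-s.
\]
Then $[z^{s_n}]F\neq0$, $[z^{u_n}]F\neq0$, and $[z^k]F=0$ for $s_n<k<u_n$. Moreover $s_n<u_n\le t_{n+1}-s\le s_{n+1}$, because $d_{n+1}\ge0$. Re-indexing from such an $n_0$ gives positive integer sequences as required. It remains to check the growth condition. By \eqref{eq:block-separation},
\[
\frac{u_n}{s_n}=\frac{t_{n+1}-s}{t_n+d_n-s}\ge\frac{(n+1)(t_n+d_n)-s}{t_n+d_n-s}\ge n+1-\frac{s}{t_n+d_n-s}\cdot 0\ \text{(roughly)}\ \longrightarrow\infty .
\]
More precisely, the ratio is at least $(n+1)(t_n+d_n)/(t_n+d_n)-o(1)$, because $t_n\to\infty$ while $s$ is fixed. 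Theorem~\ref{theorem:mahler-lacunary} then gives the transcendence of $F$ over $\C(z)$.

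The argument is essentially bookkeeping. The only points needing care are the following:
\begin{itemize}
\item the shift by $s$ must preserve nonvanishing, which holds because the factorial factors are nonzero;
\item the leading coefficient of $H_n$ is nonzero, which holds by the definition of degree;
\item the inequality $u_n\le s_{n+1}$ must hold, and it is immediate.
\end{itemize}
I expect no genuine obstacle; the heavy lifting is contained in Mahler's theorem.
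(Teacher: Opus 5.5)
Your proposal is correct and matches the paper's proof: the same gap endpoints $t_n+d_n-s$ and $t_{n+1}-s$, the same nonvanishing checks via $c_{n,d_n}\neq0$ and $H_{n+1}(0)=1$, and the same appeal to Theorem~\ref{theorem:mahler-lacunary}. The only blemish is your garbled growth display. It is cleaner to state directly that $u_n/s_n\ge n+1$, since \eqref{eq:block-separation} gives $t_{n+1}-s\ge(n+1)(t_n+d_n)-s\ge(n+1)(t_n+d_n-s)$.
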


\begin{proof}
Write
\[
f_{\boldsymbol{\varepsilon}}(z)
=
\sum_{k\geq 0}a_kz^k
\]
and, for every $n\geq 1$, write
\[
H_n(z)
=
\sum_{j=0}^{d_n}c_{n,j}z^j.
\]
Since $d_n=\deg H_n$, we have $c_{n,d_n}\neq 0$, while
$H_n(0)=1$ gives $c_{n,0}=1$.

Fix an integer $s\geq 0$. Since $f_{\boldsymbol{\varepsilon}}$ is
analytic in $B(0,1)$, so is $f_{\boldsymbol{\varepsilon}}^{(s)}$; in
particular, its Taylor series at the origin has positive radius of
convergence. Since $t_n+d_n\to\infty$, choose $n_0\geq 1$ such that
\[
t_n+d_n>s
\]
for every $n\geq n_0$. For each $n\geq n_0$, define
\[
S_n:=t_n+d_n-s
\qquad\text{and}\qquad
U_n:=t_{n+1}-s.
\]
By \eqref{eq:block-separation}, we have
\[
t_{n+1}>t_n+d_n>s,
\]
so $S_n$ and $U_n$ are positive integers satisfying $S_n<U_n$. The $n$th shifted block is supported in
\[
[t_n,t_n+d_n],
\]
whereas the $(n+1)$st shifted block begins at $t_{n+1}$. Hence
\[
a_k=0
\qquad
(t_n+d_n<k<t_{n+1}).
\]
Using
\[
[z^m]f_{\boldsymbol{\varepsilon}}^{(s)}(z)
=
\frac{(m+s)!}{m!}\,a_{m+s},
\]
we therefore obtain
\[
[z^m]f_{\boldsymbol{\varepsilon}}^{(s)}(z)=0
\qquad
(S_n<m<U_n).
\]

We next verify that the coefficients at both endpoints of this gap are
nonzero. Since the coefficient of $z^{t_n+d_n}$ in the $n$th block is
$\varepsilon_n c_{n,d_n}$ and the shifted blocks have disjoint supports,
we have
\[
a_{t_n+d_n}
=
\varepsilon_n c_{n,d_n}
\neq 0.
\]
Consequently,
\[
[z^{S_n}]f_{\boldsymbol{\varepsilon}}^{(s)}(z)
=
\frac{(t_n+d_n)!}{(t_n+d_n-s)!}\,
\varepsilon_n c_{n,d_n}
\neq 0.
\]
Similarly, since $H_{n+1}(0)=1$, the coefficient of $z^{t_{n+1}}$ in
$f_{\boldsymbol{\varepsilon}}$ is $\varepsilon_{n+1}$, and hence
\[
[z^{U_n}]f_{\boldsymbol{\varepsilon}}^{(s)}(z)
=
\frac{t_{n+1}!}{(t_{n+1}-s)!}\,
\varepsilon_{n+1}
\neq 0.
\]

Moreover, \eqref{eq:block-separation} yields
\[
\frac{U_n}{S_n}
=
\frac{t_{n+1}-s}{t_n+d_n-s}
\longrightarrow\infty.
\]
Indeed,
\[
\frac{t_{n+1}-s}{t_n+d_n-s}
=
\frac{t_{n+1}}{t_n+d_n}
\,
\frac{1-s/t_{n+1}}{1-s/(t_n+d_n)},
\]
where the first factor tends to infinity and the second tends to $1$.
Finally,
\[
U_n
=
t_{n+1}-s
\leq
t_{n+1}+d_{n+1}-s
=
S_{n+1}.
\]

Thus the coefficient sequence of
$f_{\boldsymbol{\varepsilon}}^{(s)}$ satisfies all the hypotheses of
Theorem~\ref{theorem:mahler-lacunary}, with gap endpoints $S_n$ and
$U_n$. In particular, $f_{\boldsymbol{\varepsilon}}^{(s)}$ is a
nonpolynomial strongly lacunary power series. Theorem
\ref{theorem:mahler-lacunary} therefore implies that
$f_{\boldsymbol{\varepsilon}}^{(s)}$ is transcendental over $\C(z)$.
\end{proof}

\begin{proof}[Proof of Theorem~\ref{theorem:main}]
Propositions~\ref{proposition:analytic},
\ref{proposition:values}, and
\ref{proposition:transcendence} establish all the asserted analytic,
arithmetic, and transcendence properties.

If $\boldsymbol\varepsilon\neq\boldsymbol\eta$, then the two sign sequences
differ at some index $n$. By \eqref{eq:marker}, the coefficients of
$z^{t_n}$ in $f_{\boldsymbol\varepsilon}$ and
$f_{\boldsymbol\eta}$ are, respectively, $\varepsilon_n$ and $\eta_n$,
and are therefore distinct. Hence the map
\[
        \boldsymbol\varepsilon
        \longmapsto
        f_{\boldsymbol\varepsilon}
\]
is injective. Since
\[
        \abs{\Omega}
        =
        \abs{\{-1,1\}^{\N}}
        =
        2^{\aleph_0},
\]
the construction yields continuum many distinct functions with all the
stated properties.
\end{proof}

\section{A non-Archimedean obstruction for real phases}
\label{section:real-phases}

For a real algebraic number $\lambda>1$, define the real rank-one alphabet
\begin{equation}\label{eq:Rlambda}
        \Rset_\lambda
        :=
        \{0\}\cup
        \{\pm\lambda^m:m\in\Z_{\geq0}\}.
\end{equation}
The construction used in the proof of Theorem~\ref{theorem:main} relies on
algebraic complex phases to produce local cancellations while preserving the
prescribed coefficient moduli. We now show that this phase freedom overcomes
a genuine arithmetic obstruction: zeros of polynomials with coefficients in
$\Rset_\lambda$ are necessarily subject to strong non-Archimedean
restrictions.

\begin{proposition}\label{proposition:real-obstruction}
Let $\lambda\in\Qbar^\times$, let $K$ be a number field containing
$\lambda$, and let
\[
        P(z)=\sum_{j=0}^{d}c_jz^j\in K[z]
\]
be a nonzero polynomial whose nonzero coefficients belong to
\[
        \{\pm\lambda^m:m\in\Z_{\geq0}\}.
\]
Suppose that $\alpha\in\Qbar^\times$ satisfies $P(\alpha)=0$, and enlarge
$K$, if necessary, so that $\alpha\in K$. Then
\[
        v(\alpha)=0
\]
for every finite place $v$ of $K$ such that $v(\lambda)=0$.

Equivalently, if
\[
        S_\lambda
        :=
        \{v:v \text{ is a finite place of }K
        \text{ and }v(\lambda)\neq0\},
\]
then every nonzero zero of $P$ is an $S_\lambda$-unit.
\end{proposition}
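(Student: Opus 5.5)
Fix a finite place $v$ of $K$ with $v(\lambda)=0$, normalized as a discrete valuation $v\colon K^\times\to\Z$. The key observation is that every nonzero coefficient $c_j=\pm\lambda^{m_j}$ then satisfies
\[
        v(c_j)=m_jv(\lambda)=0 ,
\]
since $v(-1)=0$. So all nonzero coefficients of $P$ are $v$-adic units, and the proposal is a Newton-polygon (ultrametric dominance) argument showing that a root of such a polynomial must itself be a $v$-unit.

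Suppose, for contradiction, that $v(\alpha)\neq0$. Let $J:=\{j:c_j\neq0\}$, which is nonempty since $P\neq0$. For $j\in J$ we get $v(c_j\alpha^j)=j\,v(\alpha)$, and these values are pairwise distinct for distinct $j$ because $v(\alpha)\neq0$.

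If $v(\alpha)>0$, the term with the smallest index $j_0=\min J$ has the strictly smallest valuation. If $v(\alpha)<0$, the term with $j_1=\max J$ has the strictly smallest valuation. By the strict ultrametric inequality, a sum of terms with a unique minimal valuation has exactly that valuation, hence is nonzero. Therefore
\[
        v(P(\alpha))=\min_{j\in J}v(c_j\alpha^j)<\infty ,
\]
so $P(\alpha)\neq0$, contradicting $P(\alpha)=0$. Hence $v(\alpha)=0$.

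One edge case needs a word. If $J=\{j_0\}$, then $P(\alpha)=c_{j_0}\alpha^{j_0}\neq0$ for $\alpha\neq0$, which again contradicts the hypothesis; the argument above already covers this. The equivalent $S_\lambda$-unit formulation is then just a restatement: $\alpha$ has valuation zero at every finite place outside $S_\lambda$.

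There is no real obstacle here; the only point requiring care is that the hypothesis $\alpha\neq0$ and the enlargement of $K$ to contain $\alpha$ are both needed for $v(\alpha)$ to make sense. The rational corollary mentioned in the introduction follows by taking $K=\Q$ and $\lambda=p$: then $\alpha\in\Q^\times$ has $v_\ell(\alpha)=0$ for every prime $\ell\neq p$, so $\alpha=\pm p^k$ with $k\in\Z$.
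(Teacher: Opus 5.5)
Your proof is correct and follows essentially the same route as the paper. Both observe that every nonzero coefficient is a $v$-unit when $v(\lambda)=0$. Both then show that the lowest-degree term (if $v(\alpha)>0$) or the highest-degree term (if $v(\alpha)<0$) is the unique term of minimal valuation, which contradicts $P(\alpha)=0$ by the ultrametric inequality.
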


\begin{proof}
Since $\alpha\neq0$ and $P(\alpha)=0$, the polynomial $P$ must contain at
least two nonzero terms. Let
\[
        j_0<j_1<\cdots<j_r,
        \qquad r\geq1,
\]
be the exponents corresponding to its nonzero coefficients. Thus
\[
        P(z)=\sum_{k=0}^{r}c_{j_k}z^{j_k}.
\]

Fix a finite place $v$ of $K$ such that $v(\lambda)=0$. Since every
$c_{j_k}$ is of the form $\pm\lambda^{m_k}$ for some $m_k\geq0$, one has
\[
        v(c_{j_k})=0
        \qquad(0\leq k\leq r).
\]
Consequently,
\[
        v\bigl(c_{j_k}\alpha^{j_k}\bigr)
        =
        j_kv(\alpha).
\]

Suppose first that $v(\alpha)>0$. Since
$j_0<j_k$ for every $k\geq1$, it follows that
\[
        v\bigl(c_{j_0}\alpha^{j_0}\bigr)
        <
        v\bigl(c_{j_k}\alpha^{j_k}\bigr)
        \qquad(1\leq k\leq r).
\]
Hence $c_{j_0}\alpha^{j_0}$ is the unique term of least valuation in the
vanishing sum
\[
        \sum_{k=0}^{r}c_{j_k}\alpha^{j_k}=0.
\]
This is impossible: in a non-Archimedean valued field, a vanishing finite
sum cannot have a unique term of strictly minimal valuation.

Similarly, suppose that $v(\alpha)<0$. Since $j_k<j_r$ for every
$k<r$, one obtains
\[
        v\bigl(c_{j_r}\alpha^{j_r}\bigr)
        <
        v\bigl(c_{j_k}\alpha^{j_k}\bigr)
        \qquad(0\leq k<r).
\]
Thus $c_{j_r}\alpha^{j_r}$ is the unique term of least valuation in the
same vanishing sum, which is again impossible. Therefore
\[
        v(\alpha)=0.
\]

Since this holds at every finite place outside $S_\lambda$, the element
$\alpha$ is an $S_\lambda$-unit.
\end{proof}

The preceding proposition becomes especially explicit when the generator is
a rational prime.

\begin{corollary}\label{corollary:prime-real-obstruction}
Let $p$ be a prime. A nonzero rational number $\alpha$ is a zero of a
nonzero polynomial whose nonzero coefficients belong to
\[
        \{\pm p^m:m\in\Z_{\geq0}\}
\]
if and only if
\[
        \alpha=\pm p^k
        \qquad\text{for some }k\in\Z.
\]
\end{corollary}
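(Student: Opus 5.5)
The corollary has two directions, and I would prove them separately.

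For sufficiency, take $\alpha=\pm p^k$ and exhibit a binomial. If $k\geq0$, the polynomial $z\mp p^k$ has nonzero coefficients $1=p^0$ and $\mp p^k$, both in the alphabet $\{\pm p^m:m\in\Z_{\geq0}\}$, and it vanishes at $\alpha$. If $k<0$, write $k=-\ell$ with $\ell>0$ and use $p^\ell z\mp1$, which again has admissible coefficients and vanishes at $\pm p^{-\ell}$.

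For necessity, I would apply Proposition~\ref{proposition:real-obstruction} with $K=\Q$ and $\lambda=p$. The finite places of $\Q$ are the $q$-adic valuations $v_q$ for primes $q$. We have $v_q(p)\neq0$ exactly when $q=p$, so $S_\lambda=\{v_p\}$. The proposition then gives $v_q(\alpha)=0$ for every prime $q\neq p$. By unique factorization in $\Q^\times$, this means $\alpha=\pm p^{v_p(\alpha)}$, which is of the required form with $k=v_p(\alpha)\in\Z$.

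There is no real obstacle here. The only point to check is that the proposition's hypotheses are met. The coefficients are rational, hence lie in $K=\Q$. The polynomial is nonzero, and $\alpha$ is a nonzero zero of it. Since $\alpha\in\Q=K$, no enlargement of $K$ is needed. With those checks, the necessity direction is just the translation from "$S_\lambda$-unit in $\Q$" to "signed integral power of $p$".
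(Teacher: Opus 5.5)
Your proposal is correct and follows the paper's proof: necessity comes from applying Proposition~\ref{proposition:real-obstruction} with $K=\Q$, $\lambda=p$ to get $v_\ell(\alpha)=0$ for all primes $\ell\neq p$, and sufficiency from the binomials $z\mp p^k$ and $p^r z\mp 1$. The only cosmetic difference is that the paper writes $\alpha=a/q$ in lowest terms and uses coprimality, where you invoke unique factorization in $\Q^\times$ directly.
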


\begin{proof}
Suppose that such a polynomial vanishes at
\[
        \alpha=\frac{a}{q}\in\Q^\times,
\]
where $a$ and $q$ are coprime integers and $q>0$. For every rational prime
$\ell\neq p$, Proposition~\ref{proposition:real-obstruction} gives
\[
        v_\ell(\alpha)=0.
\]
Hence no prime other than $p$ divides either $a$ or $q$. Since $a$ and $q$
are coprime, one of them is equal to $1$ up to sign, and therefore
\[
        \alpha=\pm p^k
\]
for some $k\in\Z$.

Conversely, if $k\geq0$, then $\pm p^k$ is a zero of
\[
        z\mp p^k,
\]
whereas, if $r\geq1$, then $\pm p^{-r}$ is a zero of
\[
        p^rz\mp1.
\]
In each case, all nonzero coefficients belong to
$\{\pm p^m:m\in\Z_{\geq0}\}$.
\end{proof}

We next record the consequence that is most directly relevant to the
local-annihilation procedure.

\begin{corollary}\label{corollary:infinitely-many-obstructed-points}
For every fixed real algebraic number $\lambda>1$, there exist infinitely
many rational numbers $\alpha\in(0,1)$ that are not zeros of any nonzero
polynomial with coefficients in $\Rset_\lambda$.
\end{corollary}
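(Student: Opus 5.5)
Apply Proposition~\ref{proposition:real-obstruction} in a number field $K$ containing $\lambda$ (every rational number lies in $K$). The idea is to find infinitely many rational primes $\ell$ such that $v(\lambda)=0$ at every place $v$ of $K$ above $\ell$. For each such $\ell$, the number $\alpha=1/\ell\in(0,1)$ satisfies $v(\alpha)=-e(v\mid\ell)<0$ at those places. Hence $\alpha$ is not an $S_\lambda$-unit, and by Proposition~\ref{proposition:real-obstruction} it cannot be a zero of any nonzero polynomial whose nonzero coefficients lie in $\{\pm\lambda^m:m\geq0\}$.

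This covers all polynomials with coefficients in $\Rset_\lambda$. The proposition is stated for nonzero polynomials whose nonzero coefficients lie in $\{\pm\lambda^m\}$, which is exactly a nonzero polynomial with coefficients in $\Rset_\lambda$. So no extra reduction is needed. The one point to check is the degenerate case of a monomial: it has no nonzero zeros, which the proof of the proposition already handles by observing that at least two terms are needed.

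The key step is to show that the set $S_\lambda$ is finite, so that only finitely many rational primes lie below it. Write $\lambda$ as an element of $K^\times$. Its principal fractional ideal $(\lambda)$ factors into finitely many prime ideals, so $v(\lambda)\neq0$ for only finitely many finite places $v$. Let $P_\lambda$ be the finite set of rational primes lying under these places. Then every prime $\ell\notin P_\lambda$ has all its places $v\mid\ell$ satisfying $v(\lambda)=0$. Since $v(1/\ell)=-v(\ell)<0$ for each such $v$, the number $1/\ell$ fails to be an $S_\lambda$-unit. Infinitely many primes remain outside $P_\lambda$, so this yields infinitely many obstructed points $1/\ell\in(0,1)$.

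There is no serious obstacle here; the argument is bookkeeping. The only care needed is to make sure $K$ is fixed once, containing $\lambda$ (rational $\alpha$ needs no enlargement). The finiteness of places with $v(\lambda)\neq0$ is standard unique factorization of ideals in $\mathcal O_K$. One could strengthen the conclusion to all $a/q$ with $q$ divisible by some prime outside $P_\lambda$, but the statement only requires infinitely many.
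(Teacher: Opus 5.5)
Your proposal is correct and follows the paper's proof essentially verbatim. You fix a number field $K\ni\lambda$, discard the finitely many rational primes lying below places with $v(\lambda)\neq0$, and for any remaining prime $q$ apply Proposition~\ref{proposition:real-obstruction} to $\alpha=1/q$, which has negative valuation at every place above $q$; your extra remarks on the finiteness of $S_\lambda$ and the monomial case are routine details the paper leaves implicit.
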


\begin{proof}
Let $K$ be a number field containing $\lambda$, and let $S$ be the finite
set of rational primes lying below the finite places $v$ of $K$ for which
\[
        v(\lambda)\neq0.
\]
Choose a rational prime $q\notin S$ and set
\[
        \alpha:=\frac1q.
\]
For every finite place $v$ of $K$ lying above $q$, one has
\[
        v(\lambda)=0
        \qquad\text{and}\qquad
        v(\alpha)<0.
\]
Proposition~\ref{proposition:real-obstruction} therefore shows that
$\alpha$ cannot be a zero of any nonzero polynomial with coefficients in
$\Rset_\lambda$. Since there are infinitely many primes outside the finite
set $S$, the conclusion follows.
\end{proof}

Proposition~\ref{proposition:real-obstruction} explains why algebraic
complex phases enter the proof of Theorem~\ref{theorem:main}. They are not
introduced merely as a technical convenience: they provide local
cancellations at arbitrary algebraic points that are unavailable, in
general, when all coefficients are restricted to the real rank-one alphabet
$\Rset_\lambda$.

The proposition does not rule out the possibility that a fundamentally
different global construction might succeed with coefficients in
$\Rset_\lambda$. It shows, more precisely, that the coefficient-preserving
local-annihilation mechanism developed in this paper cannot be transferred
unchanged to the real-phase setting.

Theorem~\ref{theorem:main} is therefore complementary to the integral
construction of \cite{marquesmoreira2021}. In that work, the coefficients
are real integers whose prime divisors are restricted to $2$ and $3$. In
the present construction, the nonzero coefficient moduli belong to the
rank-one semigroup generated by $\lambda$, while algebraic complex phases
supply the flexibility needed for local cancellation. Neither result
settles Mahler's Problem~A for bounded integer coefficients.

\section*{Acknowledgements}

The author thanks the Conselho Nacional de Desenvolvimento Cient\'ifico e
Tecnol\'ogico (CNPq) for its financial support.

\section*{Declaration of competing interest}

The author declares that he has no known competing financial interests or
personal relationships that could have appeared to influence the work
reported in this paper.

\section*{Declaration of generative AI and AI-assisted technologies
in the manuscript preparation process}

During the preparation of this work, the author used ChatGPT (OpenAI) solely for English grammar correction and language editing. The author reviewed the resulting text and takes full responsibility for the content of the article.

\end{document}